\documentclass[reqno]{amsart}
\usepackage{amsthm, amsmath, amssymb,amscd}
\usepackage{xcolor,hyperref}
\usepackage{enumitem}
\newtheorem{theorem}{Theorem}[section]

\newtheorem{corollary}[theorem]{Corollary}
\newtheorem{proposition}[theorem]{Proposition}
\theoremstyle{definition}

\newtheorem{example}[theorem]{Example}

\theoremstyle{remark}
\newtheorem{remark}[theorem]{Remark}

\numberwithin{equation}{section}
\theoremstyle{definition}

\allowdisplaybreaks

\begin{document}
	
	\title[Discriminant and Index]{On the discriminant and index of a certain class of polynomials}

	\author{Rupam Barman}
	\address{Department of Mathematics, Indian Institute of Technology Guwahati, Assam, India, PIN-781039}
	\email{rupam@iitg.ac.in \\
		\href{https://orcid.org/0000-0002-4480-1788}{ORCID: 0000-0002-4480-1788}}
	
	\author{Anuj Narode}
	\address{Department of Mathematics, Indian Institute of Technology Guwahati, Assam, India, PIN- 781039}
	\email{anujanilrao@iitg.ac.in \\
		\href{https://orcid.org/0009-0005-6643-3401}{ORCID: 0009-0005-6643-3401}}
	
	\author{Vinay Wagh}
	\address{Department of Mathematics, Indian Institute of Technology Guwahati, Assam, India, PIN- 781039}
	\email{vinay\_wagh@yahoo.com \\
		\href{https://orcid.org/0000-0003-1977-464X}{ORCID: 0000-0003-1977-464X}}

	\date{\today}
	
	\thanks{}
	
	\subjclass[2010]{11R04; 11R09; 12F05}
	
	\keywords{Monogenity; power integral basis; discriminant; index of polynomial}
	
	\dedicatory{}
	\begin{abstract}
		 Let $f(x) = (x^{2}+1)^{n} - a x^{n} \in \mathbb{Z}[x]$ and assume $f(x)$ is irreducible. Let $\theta$ be a root of $f(x)$, set $K= \mathbb{Q}(\theta)$, and denote by $\mathbb{Z}_{K}$ the ring of integers of $K$. The index of $f$, denoted $\operatorname{ind}(f)$, is the index of $\mathbb{Z}[\theta]$ in $\mathbb{Z}_{K}$. A polynomial $f(x)$ is said to be monogenic if $\operatorname{ind}(f) = 1$.
		 In this article, we explicitly compute the discriminant of the polynomial $f(x)$, and then derive necessary and sufficient conditions on the parameters $a$ and $n$ for $f(x)$ to be monogenic. Furthermore, we provide a complete description of the primes that divide $\operatorname{ind}(f)$.
	\end{abstract}
	\maketitle
		\section{Introduction and Statement of Results}
		Let $f(x) \in \mathbb{Z}[x]$ be a monic irreducible polynomial with a root $\theta$ and $K = \mathbb{Q}(\theta)$. We denote by $\mathbb{Z}_K$ the ring of integers of $K$. It is well-known that $\mathbb{Z}_K$ is a free $\mathbb{Z}$-module of rank $\deg(f)$, and $\mathbb{Z}[\theta]$ is a submodule of $\mathbb{Z}_K$ of the same rank so that $[\mathbb{Z}_K : \mathbb{Z}[\theta]]$ is finite. The index of $f(x)$, denoted $\operatorname{ind}(f)$, is defined to be the index $[ \mathbb{Z}_K : \mathbb{Z}[\theta]]$.  
		The polynomial $f(x)$ is called monogenic if the index of $f(x)$ is one, i.e., $\mathbb{Z}_K = \mathbb{Z}[\theta]$. Further, the number field $K$ is said to be monogenic if there exists some $\alpha \in \mathbb{Z}_K$ such that $\mathbb{Z}_K = \mathbb{Z}[\alpha]$. Note that the monogenity of the polynomial $f(x)$ implies the monogenity of the number field $K$. However, the converse is false in general. For example, let $K = \mathbb{Q}(\theta)$, where $\theta$ is a root of $f(x)=x^2-5$. Then $K$ is monogenic, since $\mathbb{Z}_K = \mathbb{Z}\left[\frac{1 + \sqrt{5}}{2}\right]$, whereas $f(x)$ is not monogenic because $[\mathbb{Z}_K: \mathbb{Z}[\sqrt{5}]] = 2$.
		\par
		We let $\Delta(f)$ and $\Delta(K)$ denote the discriminants of the polynomial $f(x)$ and the number field $K$, respectively. It is well-known that $\Delta(f)$ and $\Delta(K)$ are related by the formula
		\begin{equation} \label{eq-2.1}
			\Delta(f) = \operatorname{ind}(f)^2 \cdot \Delta(K).	
		\end{equation}
		The determination of whether an algebraic number field is monogenic is a classical and fundamental problem in algebraic number theory. We easily see from \eqref{eq-2.1} that if $\Delta(f)$ is squarefree, then $f(x)$ is monogenic. However, the converse does not hold in general. In this connection, several classes of monogenic polynomials with non-squarefree discriminant have been investigated, see for example \cite{barman2025, Jones_2019, Jones-acta}. When the discriminant of $f(x)$ is not squarefree,
		it can be quite difficult to establish that $[\mathbb{Z}_K : \mathbb{Z}[\theta]]=1$. One of the standard techniques used is known as Dedekind's Index Criterion \cite[Theorem~6.1.4]{cohen}. This method is applied to determine whether or not a particular prime $p$ is a divisor of $[\mathbb{Z}_K : \mathbb{Z}[\theta]]$. Using this criterion, Jakhar~\emph{et al.} \cite{Khanduja} obtained necessary and sufficient conditions for determining the primes dividing the index of a trinomial. In \cite{Jones-2019}, Jones computed the discriminant of the polynomial $g(x)=x^{n}+a(bx+c)^{n}\in \mathbb{Z}[x]$ with $1\leq m<n$ and proved that when $\gcd(n,\,mb)=c=1$, there exist infinitely many values of $a$ for which $g(x)$ is irreducible and monogenic. In the same article, he conjectured that if $\gcd(n,\,mb)=1$ 
		and $a$ is prime, then $g(x)$ is monogenic if and only if 
		$n^{n}+(-1)^{\,n+m} b^{n} (n-m)^{\,n-m} m^{m}a$
		is squarefree.
		This conjecture was subsequently proved by Kaur~\emph{et al.} in \cite{Surender-2024}. 
		In \cite{Jakhar-2024}, Jakhar established necessary and sufficient conditions for the primes dividing 
		the index of $g(x)$, and further proved that, if $\deg(g)=q$ is prime and if there exists a prime 
		$p$ such that $p\mid \Delta(g)$, $p^{2}\nmid \Delta(g)$, and $p\nmid abcm$, then the Galois group of 
		$g(x)$ is isomorphic to the symmetric group $S_{q}$.
		Later, in \cite{Ravi}, Jakhar~\emph{et al.} considered the more general polynomial
		$g_1(x)=x^{n}+a(bx^{k}+c)^{n}$, and studied both its discriminant and the primes dividing its index. In a similar vein, Jones~\cite{Jones-acta} determined the discriminant of the family  $g_{a}(x)=x^{\,n-m}(x+k)^{m}+a$
		and proved that there exist infinitely many primes $p$ for which $g_{p}(x)$ is monogenic. More 
		recently, Jakhar~\emph{et al.} \cite{Prabhakr-2025} characterized the primes dividing the index of 
		$g_{a}(x)$.
		\par 
		In this article, we study the discriminant and index of the polynomial $f(x) = (x^2 + 1)^n - ax^n$, where $a\in \mathbb{Z}$. In the following theorem, we calculate the discriminant of $f(x)$.
		\begin{theorem} \label{thm-1.1}
			Let $f(x) = (x^2 + 1)^n - a x^n \in \mathbb{Z}[x]$, where $n \ge 2$. If $f(x)$ is irreducible, then  
			\begin{equation}\label{eq-1}
				\Delta(f) = (-1)^{2n \choose 2} n^{2n} a^{2n-2}(2^n - a)(2^n - (-1)^n a).
			\end{equation}	
		\end{theorem}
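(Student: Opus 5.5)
We compute $\Delta(f)$ through the resultant formula $\Delta(f) = (-1)^{\binom{2n}{2}} \operatorname{Res}(f,f') = (-1)^{\binom{2n}{2}} \prod_{f(\theta)=0} f'(\theta)$. This uses that $f$ is monic of degree $2n$, and irreducibility guarantees the roots $\theta_1,\dots,\theta_{2n}$ are distinct. Since $f(0)=1$, every root is nonzero.

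The first step is to simplify $f'(\theta)$ at a root. We have
\[ f'(x) = 2nx(x^2+1)^{n-1} - nax^{n-1}. \]
At a root, $(\theta^2+1)^n = a\theta^n$, so $a \neq 0$ (otherwise $f=(x^2+1)^n$ would be reducible) and $\theta^2+1\neq 0$. Multiplying by $(\theta^2+1)$ gives
\[ (\theta^2+1)f'(\theta) = 2n\theta\, a\theta^n - na\theta^{n-1}(\theta^2+1) = na\theta^{n-1}(\theta^2-1). \]
Hence
\[ \prod_i f'(\theta_i) = \frac{n^{2n}a^{2n}\left(\prod_i\theta_i\right)^{n-1}\prod_i(\theta_i^2-1)}{\prod_i(\theta_i^2+1)}. \]

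The second step evaluates the three symmetric products using $f(x)=\prod_i(x-\theta_i)$.
\begin{itemize}
\item Since $\prod_i\theta_i = f(0) = 1$, the first product contributes nothing.
\item Writing $\prod_i(\theta_i^2-1)=\prod_i(1-\theta_i)(-1-\theta_i)=f(1)f(-1)$ gives $(2^n-a)(2^n-(-1)^na)$.
\item Writing $\prod_i(\theta_i^2+1)=\prod_i(i-\theta_i)(-i-\theta_i)=f(i)f(-i)$, and using $f(\pm i) = -a(\pm i)^n$, gives $a^2 (i\cdot(-i))^n = a^2$.
\end{itemize}
Combining these yields
\[ \prod_i f'(\theta_i) = n^{2n}a^{2n-2}(2^n-a)(2^n-(-1)^na), \]
and attaching the sign $(-1)^{\binom{2n}{2}}$ gives \eqref{eq-1}.

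There is no real obstacle here. The only points needing care are the sign convention in $\Delta = (-1)^{\binom{N}{2}}\prod f'(\theta_i)$ for monic $f$ of degree $N=2n$, and justifying division by $\theta_i^2+1$ and $a$. Irreducibility handles both: a root with $\theta^2=-1$ would force $a\theta^n=0$, which is impossible since $\theta\neq0$ and $a\neq0$.
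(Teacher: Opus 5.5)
Your proposal is correct and follows essentially the same route as the paper: it uses the identity $(\theta^2+1)f'(\theta)=na\theta^{n-1}(\theta^2-1)$ and then evaluates the resulting norms as products over the roots, via $f(0)$, $f(\pm 1)$ and $f(\pm i)$. Your evaluation $\prod_i(\theta_i^2+1)=f(i)f(-i)=a^2$ is slightly cleaner than the paper's step: the paper extracts $\mathcal{N}(\theta^2+1)=a^2$ from $\mathcal{N}(\theta^2+1)^n=a^{2n}$, which leaves the sign $\pm a^2$ undetermined when $n$ is even.
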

		In the following theorem, we characterize all the primes dividing the index of $f(x)$. 
		\begin{theorem} \label{Thm-1.2}
			Let $K = \mathbb{Q}(\theta)$ be an algebraic number field with $\theta$ in the ring $\mathbb{Z}_K$ of algebraic integers of $K$ having minimal polynomial $f(x)  = (x^2 + 1)^n - ax^n$ over $\mathbb{Q}$. A prime factor $p$ of the discriminant $\Delta(f)$ of $f(x)$ does not divide $[\mathbb{Z}_K : \mathbb{Z}[\theta]]$ if and only if $p$ satisfies one of the following conditions:
			\begin{enumerate}[label=\textup{(\roman*)}]
				\item when $p \mid a$, then $p^2 \nmid a$;
				\item when $p \nmid a$ and $p \mid n$, then $p^2 \nmid (a^{p^j} - a)$ with $j$ as the highest power of $p$ dividing $n$;
				\item when $p \nmid an$ and $n$ is odd, then $p^2$ does not divide $\Delta(f)$;
				\item  when $p \nmid an$ and $n$ is even, then $p^2$ does not divide $2^n - a$.
			\end{enumerate}
		\end{theorem}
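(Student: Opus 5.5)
The plan is to apply Dedekind's Index Criterion \cite[Theorem~6.1.4]{cohen} prime by prime. By Theorem~\ref{thm-1.1}, a prime $p\mid\Delta(f)$ divides $n$, $a$, $2^n-a$ or $2^n-(-1)^na$, which gives the four cases of the statement. In each case I will:
\begin{itemize}
\item factor $\bar f$ modulo $p$;
\item choose monic lifts $g$ of the radical of $\bar f$ and $h$ of $\bar f/\bar g$, and set $F=(f-gh)/p$;
\item check whether $\bar F$ is coprime to $\gcd(\bar g,\bar h)$.
\end{itemize}
Coprimality will always be tested by evaluating $F$ at a root of a repeated factor.

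\emph{Case $p\mid a$.} Here $\bar f=(x^2+1)^n$, so the repeated factors all come from $x^2+1$.
\begin{itemize}
\item If $p$ is odd and $x^2+1$ is irreducible mod $p$, take $g=x^2+1$. Then $F=-(a/p)x^n$, which is coprime to $x^2+1$ mod $p$ iff $p^2\nmid a$.
\item If $p\equiv1\pmod 4$, pick $b$ with $p\mid 1+b^2$ and write $x^2+1=(x-b)(x+b)+(1+b^2)$. Expanding,
\[
F\equiv n\tfrac{1+b^2}{p}(x^2+1)^{n-1}-\tfrac{a}{p}x^n \pmod p .
\]
Since $n\ge2$ the first term vanishes at $x=\pm b$, so $\bar F(\pm b)=-(a/p)(\pm b)^n$. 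This is nonzero iff $p^2\nmid a$, because $p\nmid b$.
\item If $p=2$, write $x^2+1=(x+1)^2-2x$ and expand $f=((x+1)^2-2x)^n-ax^n$. Modulo $2$ the quantity $F(1)$ reduces to $-a/2$: the $k=1$ term carries the factor $(x+1)^{2n-2}$, and the terms with $k\ge2$ carry $2^{k-1}$. Again the condition is $p^2\nmid a$.
\end{itemize}

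\emph{Case $p\nmid a$, $p\mid n$.} Write $n=p^jm$ with $p\nmid m$, and put $T(x)=(x^2+1)^m-ax^m$. Since $a^{p^j}\equiv a\pmod p$, we get $\bar f=\bar T^{\,p^j}$. Because $p\nmid am$, one checks that $\bar T$ is separable apart from possible factors $x\mp1$; those are handled exactly as in the next case. So every irreducible factor $\phi$ of $\bar T$ is repeated in $\bar f$. I would use the identity
\[
f=T^{p^j}+\bigl[(x^2+1)^n-((x^2+1)^m)^{p^j}\bigr]-(a-a^{p^j})x^n+(\text{cross terms of }(u-v)^{p^j}),
\]
with $u=(x^2+1)^m$ and $v=ax^m$. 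Now evaluate at a lifted root $\xi$ of $T$ in an unramified extension. There $u(\xi)=v(\xi)$, and $f(\xi)=(a^{p^j}-a)\xi^n$ exactly. Since $\xi$ is a unit, $\bar F(\xi)\neq0$ iff $p^2\nmid a^{p^j}-a$.

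\emph{Main obstacle.} The hardest part is making this last evaluation rigorous. One must choose lifts $g,h$ so that $F(\xi)\equiv f(\xi)/p\pmod p$, i.e.\ so that $gh$ vanishes at $\xi$ to high enough order. The natural choice is the lift $T^{p^j}$ itself; then $\bar F(\xi)$ is literally $f(\xi)/p$ mod $p$, and that is what I would try first.

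\emph{Case $p\nmid an$.} Multiple roots of $\bar f$ are common roots of $\bar f$ and $\bar f'$. From $f'=2nx(x^2+1)^{n-1}-nax^{n-1}$ and $f=0$, we get $x(x^2+1)^{n-1}\bigl(2x^2-(x^2+1)\bigr)=0$. Using $p\nmid a$, this forces $x^2=1$. Moreover $x=\pm1$ is a root of $\bar f$ iff $p\mid f(\pm1)$, where $f(1)=2^n-a$ and $f(-1)=2^n-(-1)^na$, and such a root has multiplicity exactly $2$ (via $\bar f''(\pm1)\neq0$). Taking $g$ with the factor $x\mp1$ and $h$ containing it once more, Dedekind's criterion reduces to whether $p^2\nmid f(\pm1)$.
\begin{itemize}
\item For $n$ even, $f(1)=f(-1)$, so the condition is $p^2\nmid 2^n-a$.
\item For $n$ odd, $p$ cannot divide both $2^n-a$ and $2^n+a$: otherwise $p\mid 2^{n+1}$ and $p\mid a$, which is excluded. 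The condition then becomes $p^2\nmid\Delta(f)$ by Theorem~\ref{thm-1.1}.
\end{itemize}
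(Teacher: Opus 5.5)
Your treatment of the cases $p\mid a$ and $p\nmid an$ is correct and matches the paper. Choosing $g,h$ both divisible by $x\mp1$ in $\mathbb{Z}[x]$, so that $F(\pm1)=f(\pm1)/p$, is a tidy replacement for the paper's Taylor expansion at $\beta$. The gap is in the case $p\nmid a$, $p\mid n$, exactly at the step you flagged.

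In the form of the criterion you use, $\overline{g}$ must be the radical of $\overline{f}=\overline{T}^{\,p^j}$. So $gh=T^{p^j}$ is admissible only when $\overline{T}$ is squarefree, and this already fails for $n=p=3$. There $T=x^2-ax+1$ and $\overline{T}=(x\mp1)^2$ for every $a$ prime to $3$, and unless $T$ has an integer root, no monic $g\equiv x\mp1$ divides $T^3$.

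Deferring the factors $x\mp1$ to the next case does not close this. That case relies on $p\nmid n$, whereas here $\overline{f}'=0$. Moreover, evaluating at $\pm1$ gives the condition $p^2\nmid f(\pm1)=2^n-(\pm1)^na$. You would still have to show this is equivalent to $p^2\nmid a^{p^j}-a$. It is, since $a\equiv(\pm1)^m2^m\pmod p$ gives $a^{p^j}\equiv(\pm1)^n2^n\pmod{p^2}$, but you do not argue it. Your displayed identity does not help either: the cross terms of $(u-v)^{p^j}$ are divisible by $p$ but not by $T$, so they need not lie in $\langle p,\phi\rangle^2$.

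The missing idea is to expand in the other direction, as the paper does. Write $(x^2+1)^m=T+ax^m$ and raise to the $p^j$-th power. Each cross term $\binom{p^j}{k}T^k(ax^m)^{p^j-k}$ with $1\le k\le p^j-1$ is divisible by $pT$, so
\[
f=T^{p^j}+pT\,Q(x)+(a^{p^j}-a)x^n,\qquad Q\in\mathbb{Z}[x].
\]
Now use the ideal form (iii) of Theorem~\ref{Thm-2.1}. Take any monic $\phi$ irreducible mod $p$ with $\overline{\phi}\mid\overline{T}$, whether or not it is repeated in $\overline{T}$. The first two terms lie in $\langle p,\phi\rangle^2$. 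Also $(a^{p^j}-a)x^n\in\langle p,\phi\rangle^2$ iff $p^2\mid a^{p^j}-a$, because $p\mid a^{p^j}-a$ and $\overline{\phi}\neq x$ (as $\overline{T}(0)=1$).

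In your own language: any $G$ with $\overline{G}=\overline{f}$ and $G\in\langle p,\phi\rangle^2$ for each repeated $\phi$ may replace $gh$. Then $f\in\langle p,\phi\rangle^2$ iff $\overline{\phi}$ divides $\overline{(f-G)/p}$. The choice $G=T^{p^j}$ qualifies. Your evaluation $(f-G)(\xi)/p=(a^{p^j}-a)\xi^n/p$ at an arbitrary root $\xi$ of $T$ then settles the case uniformly, with no separability hypothesis.
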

		The following corollary is an immediate consequence of Theorem~\ref{Thm-1.2}.
		\begin{corollary}
			Let $K = \mathbb{Q}(\theta)$ be an algebraic number field with $\theta$ in the ring $\mathbb{Z}_K$ of algebraic integers of $K$ having minimal polynomial $f(x)  = (x^2 + 1)^n - ax^n$ over $\mathbb{Q}$. Then $\mathbb{Z}_K =~\mathbb{Z}[\theta]$ if and only if each prime $p$ dividing $\Delta(f)$ satisfies one of the conditions (i)-(iv) of Theorem~\ref{Thm-1.2}.
		\end{corollary}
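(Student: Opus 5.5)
The plan is to deduce the corollary directly from Theorem~\ref{Thm-1.2}, using the discriminant relation \eqref{eq-2.1} to reduce the question to the primes dividing $\Delta(f)$.

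First, note that $\mathbb{Z}_K=\mathbb{Z}[\theta]$ holds exactly when $\operatorname{ind}(f)=[\mathbb{Z}_K:\mathbb{Z}[\theta]]=1$. Since $\operatorname{ind}(f)$ is a positive integer, this is equivalent to no prime $p$ dividing $\operatorname{ind}(f)$.

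Next, use \eqref{eq-2.1}, $\Delta(f)=\operatorname{ind}(f)^2\Delta(K)$. If a prime $p$ divides $\operatorname{ind}(f)$, then $p^2\mid\Delta(f)$, and in particular $p\mid\Delta(f)$. Hence primes not dividing $\Delta(f)$ never divide the index. So $\operatorname{ind}(f)=1$ if and only if no prime factor $p$ of $\Delta(f)$ divides $[\mathbb{Z}_K:\mathbb{Z}[\theta]]$.

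Finally, apply Theorem~\ref{Thm-1.2} to each prime $p\mid\Delta(f)$. It says that $p\nmid[\mathbb{Z}_K:\mathbb{Z}[\theta]]$ precisely when $p$ satisfies the applicable one of (i)--(iv). These four cases are determined by whether $p\mid a$, whether $p\mid n$, and the parity of $n$, and together they cover every prime divisor of $\Delta(f)$. Combining the three steps gives both directions of the equivalence. There is no real obstacle; the only point needing care is the observation that primes outside $\Delta(f)$ cannot divide the index, and this is immediate from \eqref{eq-2.1}.
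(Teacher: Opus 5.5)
Your proposal is correct and is exactly the deduction the paper has in mind when it calls the corollary an immediate consequence of Theorem~\ref{Thm-1.2}. As in the paper, $\mathbb{Z}_K=\mathbb{Z}[\theta]$ means no prime divides $\operatorname{ind}(f)$, by \eqref{eq-2.1} any such prime must divide $\Delta(f)$, and Theorem~\ref{Thm-1.2} settles each prime divisor of $\Delta(f)$ through whichever of the mutually exclusive, exhaustive cases (i)--(iv) applies to it.
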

		\section{Proof of Theorem \ref{thm-1.1} and \ref{Thm-1.2}}
		 Dedekind's criterion \cite[Theorem~6.1.4]{cohen} has been extensively studied and generalized in the literature (see, for example, \cite{Ershov, Khanduja-2016, Khanduja-2010}). In \cite{Khanduja-2016}, Khanduja \emph{et al.} studied the equivalent versions of the generalized Dedekind's criterion. We use the following equivalent version to prove Theorem~\ref{Thm-1.2}. Also, see Lemma~2.1 in \cite{Jakhar-JNT}.
		\begin{theorem} \cite[Theorem 1.1]{Khanduja-2016} \label{Thm-2.1}
			Let $f(x) \in \mathbb{Z}[x]$ be a monic irreducible polynomial having the factorization $\overline{g}_1(x)^{e_1} \cdots \overline{g}_t(x)^{e_t}$ modulo a prime $p$ as a product of powers of distinct irreducible polynomials over $\mathbb{Z}/p\mathbb{Z}$ with each $g_i(x) \in \mathbb{Z}[x] $ monic. Let $K = \mathbb{Q}(\theta)$ with $\theta$ a root of $f(x)$. Then the following are equivalent:
			\begin{enumerate}[label=\textup{(\roman*)}]
				\item $p$ does not divide $[\mathbb{Z}_K : \mathbb{Z}[\theta]]$;
				\item for each $i$, either $e_i = 1$ or $\overline{g}_i(x)$ does not divide $\overline{M}(x)$ where, 
							\begin{equation*}
								M(x) = \frac{1}{p} \left(f(x) - g_1(x)^{e_1} \cdots g_t(x)^{e_t}\right);
							\end{equation*}
				\item $f(x)$ does not belong to the ideal $\langle p, g_i(x) \rangle^2$ in $\mathbb{Z}[x]$ for any $i$, $1 \leq i \leq t.$
			\end{enumerate}
		\end{theorem}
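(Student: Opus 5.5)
The plan is to deduce the three equivalences from the classical Dedekind criterion \cite[Theorem~6.1.4]{cohen}, which I take as known. Write $\overline{F}(x)=\prod_i \overline{g}_i(x)$ and $\overline{G}(x)=\overline{f}(x)/\overline{F}(x)=\prod_i \overline{g}_i(x)^{e_i-1}$. Define $M(x)$ as in (ii), so that $f=g_1^{e_1}\cdots g_t^{e_t}+pM$. Cohen's form of the criterion says that $p\nmid[\mathbb{Z}_K:\mathbb{Z}[\theta]]$ if and only if $\gcd(\overline{F},\overline{G},\overline{M})=1$ in $\mathbb{F}_p[x]$. One should first check that the choice of lifts in Cohen's statement (his $H=(f-FG)/p$ versus our $M$) does not matter. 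This holds because changing lifts changes the reduction of $M$ only by multiples of $\overline{F}\,\overline{G}$ and of quantities already divisible by the relevant $\overline{g}_i$; matching these normalizations is routine bookkeeping.

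For (i)$\Leftrightarrow$(ii), the common irreducible factors of $\overline{F}$ and $\overline{G}$ are exactly the $\overline{g}_i$ with $e_i\ge 2$, since the $\overline{g}_i$ are distinct irreducibles. Hence the gcd is $1$ precisely when no $\overline{g}_i$ with $e_i\ge2$ divides $\overline{M}$. This is condition (ii).

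For (ii)$\Leftrightarrow$(iii), fix $i$ and note that $\langle p,g_i\rangle^2=\langle p^2,pg_i,g_i^2\rangle$. I treat two cases.

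\emph{Case $e_i=1$.} Suppose $f\in\langle p,g_i\rangle^2$. Reducing modulo $p$ gives $\overline{g}_i^2\mid\overline{f}$ in $\mathbb{F}_p[x]$. This contradicts $e_i=1$, since the factorization is into distinct irreducibles and $\mathbb{F}_p[x]$ is a UFD. So $f\notin\langle p,g_i\rangle^2$ automatically, consistent with (ii).

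\emph{Case $e_i\ge2$.} The term $g_1^{e_1}\cdots g_t^{e_t}$ already lies in $\langle g_i^2\rangle$. Hence $f\in\langle p,g_i\rangle^2$ if and only if $pM\in\langle p^2,pg_i,g_i^2\rangle$. I claim this holds if and only if $M\in\langle p,g_i\rangle$.
\begin{itemize}
\item The direction ``if'' is immediate: multiply by $p$.
\item For ``only if'', write $pM=p^2A+pg_iB+g_i^2C$ and reduce modulo $p$ to get $\overline{g}_i^2\,\overline{C}=0$. Thus $C=pC'$, and dividing by $p$ gives $M=pA+g_iB+g_i^2C'\in\langle p,g_i\rangle$.
\end{itemize}
Finally, since $g_i$ is monic, $\mathbb{Z}[x]/\langle p,g_i\rangle\cong\mathbb{F}_p[x]/(\overline{g}_i)$. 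So $M\in\langle p,g_i\rangle$ if and only if $\overline{g}_i\mid\overline{M}$. Combining the two cases yields (ii)$\Leftrightarrow$(iii).

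The only substantive input is the classical Dedekind criterion itself, whose proof requires a local computation of $\mathbb{Z}_K\otimes\mathbb{Z}_p$ (or of the $p$-maximal order via the ring $\mathbb{Z}[\theta]+\frac{1}{p}U(\theta)\mathbb{Z}[\theta]$). I would cite it rather than reprove it. The main care point is ensuring that the normalization of $M$ matches Cohen's statement.
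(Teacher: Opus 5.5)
The paper gives no proof of this statement. It is imported from Khanduja et al.\ (with a pointer to Jakhar's Lemma~2.1) and used as a black box, so there is no internal argument to compare yours against. Your derivation is correct and complete, taking the classical Dedekind criterion as given.

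For (i)$\Leftrightarrow$(ii), all that is needed is your observation that the irreducible common factors of $\overline{F}$ and $\overline{G}$ are exactly the $\overline{g}_i$ with $e_i\ge 2$. For (ii)$\Leftrightarrow$(iii), the per-index case split is right. The key step $pM=p^2A+pg_iB+g_i^2C\Rightarrow p\mid C$ is sound, because $\mathbb{F}_p[x]$ is a domain and $\overline{g}_i\neq 0$.

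One simplification: the normalization check you flag is unnecessary. Cohen's Theorem~6.1.4 allows arbitrary monic lifts $t_i$ of the irreducible factors and an arbitrary monic lift $h$ of $\overline{f}/\prod_i\overline{g}_i$. Choosing $t_i=g_i$ and $h=\prod_i g_i^{e_i-1}$ makes his polynomial $(\prod_i g_i\cdot h-f)/p$ equal to $-M$ exactly, and the gcd does not see the sign.

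Your parenthetical justification is also slightly inaccurate. Replacing $g_i$ by $g_i+pr_i$ changes $\overline{M}$ by $-\sum_i e_i\overline{r}_i\,\overline{g}_i^{\,e_i-1}\prod_{j\ne i}\overline{g}_j^{\,e_j}$. This is generally not a multiple of $\overline{F}\,\overline{G}=\overline{f}$. What is true, and what matters, is that it is divisible by every $\overline{g}_k$ with $e_k\ge 2$.

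Your argument also makes the paper's main tool self-contained given Dedekind's criterion. It shows that (iii) is just (ii) rewritten as an ideal-membership condition. That is why the paper can read off the answer in Case~(ii) of the proof of Theorem~\ref{Thm-1.2} directly from an expansion of $f$, without ever computing $M$.
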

		Next, we recall a proposition which will be used to calculate $\Delta(f)$ in Theorem~\ref{thm-1.1}. 
		\begin{proposition} \cite[Proposition 12.1.4]{Ireland-1990} \label{Thm-2.3}
			Let $f(x) \in \mathbb{Z}[x]$ be a monic irreducible polynomial of degree $n$. Let $\theta$ be a root of $f(x)$ and $K = \mathbb{Q}(\theta)$. Then $$ \Delta(f) = (-1)^{n \choose 2} \mathcal{N}(f'(\theta)),$$ where $\mathcal{N} : = \mathcal{N}_{K / \mathbb{Q}}$ is the algebraic norm.
		\end{proposition}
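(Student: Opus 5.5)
The plan is to compare two expressions for $\prod_{i\neq j}(\theta_i-\theta_j)$, where $\theta=\theta_1,\theta_2,\dots,\theta_n$ are the roots of $f(x)$ in an algebraic closure. Since $f(x)$ is irreducible over $\mathbb{Q}$ and separable (we are in characteristic zero), these roots are distinct. We have $[K:\mathbb{Q}]=n$, and the $n$ embeddings $\sigma_1,\dots,\sigma_n$ of $K$ into $\mathbb{C}$ are determined by $\sigma_i(\theta)=\theta_i$.

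\textbf{Step 1: write the discriminant with ordered pairs.} Start from the definition
\[
\Delta(f)=\prod_{1\le i<j\le n}(\theta_i-\theta_j)^2 .
\]
For each pair $i<j$, write $(\theta_i-\theta_j)^2=-(\theta_i-\theta_j)(\theta_j-\theta_i)$. Each unordered pair then contributes one sign $-1$. There are $\binom n2$ such pairs, so
\[
\Delta(f)=(-1)^{\binom n2}\prod_{i\neq j}(\theta_i-\theta_j).
\]

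\textbf{Step 2: rewrite the product in terms of $f'$.} Differentiate $f(x)=\prod_{j}(x-\theta_j)$ by the product rule and evaluate at $x=\theta_i$. Every term that still contains the factor $x-\theta_i$ vanishes, which gives
\[
f'(\theta_i)=\prod_{j\ne i}(\theta_i-\theta_j).
\]
Taking the product over $i$ yields $\prod_{i\ne j}(\theta_i-\theta_j)=\prod_{i=1}^n f'(\theta_i)$.

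\textbf{Step 3: identify the product with the norm.} The polynomial $f'(x)$ has rational coefficients, so each embedding satisfies $\sigma_i(f'(\theta))=f'(\sigma_i(\theta))=f'(\theta_i)$. The norm $\mathcal{N}_{K/\mathbb{Q}}$ of an element of $K$ is the product of its images under all $n$ embeddings. Hence
\[
\mathcal{N}(f'(\theta))=\prod_{i=1}^n f'(\theta_i).
\]
Combining this with Steps 1 and 2 gives $\Delta(f)=(-1)^{\binom n2}\mathcal{N}(f'(\theta))$.

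The only points that need care are the sign count in Step 1 and the use of irreducibility in Step 3. Irreducibility guarantees that $\theta_1,\dots,\theta_n$ are exactly the conjugates of $\theta$, each appearing once, so that the norm is this full product over the roots and not a power of a product over a smaller field.
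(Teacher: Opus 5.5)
Your proof is correct. The sign count from pairing $(\theta_i-\theta_j)$ with $(\theta_j-\theta_i)$, the identity $f'(\theta_i)=\prod_{j\ne i}(\theta_i-\theta_j)$ for monic $f$, and the description of $\mathcal{N}_{K/\mathbb{Q}}$ as the product over the $n$ embeddings $\theta\mapsto\theta_i$ (which is where irreducibility enters) are all handled properly. The paper gives no proof of its own and simply cites Ireland--Rosen; your computation is the standard argument for this fact.
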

		 Now we prove our main results. We first prove Theorem~\ref{thm-1.1}. 
		\begin{proof}[Proof of Theorem~\ref{thm-1.1}]
			Suppose $f(\theta) = 0$. Then,
				\begin{equation}
					(\theta^2  + 1)^n = a \theta^n.  \label{eq-1.2}
				\end{equation}
			Since $f'(x) = 2nx(x^2 + 1)^{n-1} - anx^{n-1}$, \eqref{eq-1.2} yields
				\begin{align*}
					(\theta^2 + 1) f'(\theta) & = 2n\theta(\theta^2 + 1)^{n} - an\theta^{n-1}(\theta^2 + 1) \\
											 & = 2n \theta a \theta^n -na\theta^{n-1}(\theta^2 + 1) \\
											 & = na\theta^{n-1}(\theta -1 )(\theta + 1). 
				\end{align*}
			Hence,
				  \begin{equation}
				  	\mathcal{N}(f'(\theta)) = \frac{\mathcal{N}(a)\mathcal{N}(n)\mathcal{N}(\theta)^{n-1}\mathcal{N}(\theta-1)\mathcal{N}(\theta+1)}{\mathcal{N}(\theta^2 + 1)}. \label{eq-1.3}
				\end{equation}
			Note that if $f(x)$ is the minimal polynomial of $\theta$, then $f(x - 1)$ and $f(x + 1)$ are the minimal polynomials of $\theta + 1$ and $\theta - 1$, respectively. Thus, $\mathcal{N}(\theta) = 1$, $\mathcal{N}(\theta + 1) = (2^n - (-1)^na)$ and $\mathcal{N}(\theta - 1) = (2^n - a)$. Further, from \eqref{eq-1.2}, we have $\mathcal{N}(\theta^2 + 1)^n = \mathcal{N}(a)\mathcal{N}(\theta)^n = a^{2n}  $, so that $\mathcal{N}(\theta^2 + 1) = a^2$. Now, \eqref{eq-1.3} yields
				\begin{align*}
					\mathcal{N}(f'(\theta)) & = \frac{a^{2n}n^{2n}(2^n -a )(2^n -(-1)^na)}{a^2} \\ 
											& = a^{2n-2}n^{2n}(2^n -a )(2^n -(-1)^na).
				\end{align*}
			Hence, employing Proposition~\ref{Thm-2.3} we have
			\begin{equation*}
				 \Delta(f) = (-1)^{2n \choose 2} \mathcal{N}(f'(\theta))= (-1)^{2n \choose 2} n^{2n} a^{2n-2}(2^n - a)(2^n - (-1)^n a).
			\end{equation*} 
			This completes the proof of the theorem.
		\end{proof}
	Next, we prove Theorem~\ref{Thm-1.2}.
	\begin{proof}[Proof of Theorem~\ref{Thm-1.2}] Let $p$ be a prime dividing $\Delta(f)$. In view of Theorem~\ref{Thm-2.1}, $p$ does not divide $[\mathbb{Z}_K : \mathbb{Z}[\theta]]$ if and only if $f(x)\notin\langle p, g(x) \rangle^2$ for any monic polynomial $g(x)\in \mathbb{Z}[x]$ which is irreducible modulo $p$. We prove the theorem by considering the following cases.\\\\
	\underline{Case (i)}: $p \mid a$. Then $f(x) \equiv (x^2 + 1)^n \pmod p$. If $\overline{g}(x)$ is an irreducible factor of $x^2 + 1$ over $\mathbb{Z}/p\mathbb{Z}$, then $f(x) \not \in \langle p, g(x) \rangle^2$ if and only if $p^2 \nmid a$. Thus, by Theorem~\ref{Thm-2.1},  $p \nmid [\mathbb{Z}_K : \mathbb{Z}[\theta]]$ if and only if $p^2 $ does not divide $a$.\\\\
	\underline{Case (ii)}: $p \mid n $ and $p \nmid a$. Let $n = sp^j$ with $\gcd(s,p) = 1$. By  the Binomial theorem, $$f(x)  \equiv ((x^2+1)^s - ax^s)^{p^j}  \hspace{-10pt}\pmod p.$$
	Let $h(x) = (x^2+1)^s - ax^s$ and $\prod_{i=1}^{t} \overline{g_i}(x)^{e_i}$ be the factorization $h(x)$ over $\mathbb{Z}/ p \mathbb{Z}$, where $g_i(x)\in \mathbb{Z}[x]$ are monic polynomials which are distinct and irreducible modulo $p$. Now, raising both sides of $(x^2+ 1)^s  =	h(x) + ax^s$ to the $p^j$th power, we obtain
	$$(x^2 + 1)^n  = h(x)^{p^j} + a^{p^j} x^{s{p^j}} + p h(x) T(x),$$ for some polynomial $T(x)\in \mathbb{Z}[x]$, where $n=sp^j$. This yields
	\begin{equation}\label{eq-3.5}
		f(x)   = 	h(x)^{p^j} +  p h(x) T(x)  +(a^{p^j} -a) x^{n}.
 	\end{equation}
	 Therefore, by \eqref{eq-3.5}, $f(x) \not \in  \langle p , g_i(x) \rangle^2$ if and only if $p^2$ does not divide $a^{p^j} -a$. Thus, by Theorem~\ref{Thm-2.1}, $p \nmid [\mathbb{Z}_K : \mathbb{Z}[\theta]]$ if and only if $p^2$ does not divide $a^{p^j} -a$. This completes the proof in this case.\\\\
	\underline{Case (iii)}: $p \nmid an$ and $n$ is odd. Since $p \mid \Delta(f)$, the discriminant of $\overline{f}(x) $ is $0$ in $\mathbb{Z}/ p\mathbb{Z}$  so that $\overline{f}(x)$ has a repeated root in the algebraic closure of $\mathbb{Z}/p\mathbb{Z}$. Let $\alpha$ be a repeated root of $\overline{f}(x)$. Then,
		\begin{equation}
			\overline{f}(\alpha) = (\alpha^2 + 1)^n - \overline{a}\alpha^n = 0, \label{eq-3.2}
		\end{equation}		
		\begin{equation}
			 \overline{f}'(\alpha) = \overline{2n} \alpha (\alpha^2 + 1 )^{n-1} - \overline{n} \overline{a} \alpha^{n-1} = 0. \label{eq-3.3}
		\end{equation}
		Substituting  $(\alpha^2 + 1)^n = \overline{a}\alpha^n$ in \eqref{eq-3.3} yields $\overline{an} \alpha^{n-1} (\alpha^2 - 1) = 0$.
	Since $p \nmid an$ and $\alpha \neq 0$, any repeated root must satisfy 
	$\alpha^2 = 1$ in the algebraic closure of $\mathbb{Z}/p\mathbb{Z}$. If $\alpha$ is an integer such that $\alpha \equiv \pm 1 \pmod{p}$, then by \eqref{eq-3.2} we obtain $a \equiv 2^n \alpha^{-n} \pmod{p}$. In particular, we have 
	\begin{align}\label{Eq-2.6}
	a\equiv 
	\left\{\begin{array}{ll}
	\hspace{.3cm} 2^n \hspace{.3cm}\pmod{p}, & \hbox{if $\alpha\equiv 1 \hspace{.3cm}\pmod{p}$;} \\
	(-2)^n \pmod{p}, & \hbox{if $\alpha\equiv -1\pmod{p}$.}
	\end{array}
	\right.
	\end{align}	
	Both the congruences in \eqref{Eq-2.6} can hold simultaneously only if $p = 2$. However, this would force $2 \mid a$, contradicting the assumption that $p \nmid a$. Hence, either $\alpha \equiv 1 \pmod{p}$ or $\alpha \equiv -1 \pmod{p}$ is a root of $\overline{f}(x)$, but not both.
	\par 
	Next, we show that any multiple root of $\overline{f}(x)$ has multiplicity two, 
	i.e., $\overline{f}''(\alpha) \neq 0$. We have
		\begin{equation}
			f''(x) = n \left(4x^2 (n-1)(x^2 + 1)^{n-2} + 2(x^2 +1 )^{n-1} -a (n-1)x^{n-2}\right). \label{eq-3.4}
		\end{equation}
		Substituting $\alpha^2 = 1$ and $a\alpha^n \equiv 2^n\pmod{p}$ in \eqref{eq-3.4} yield $\overline{f}''(\alpha ) = \overline{n2^{n}}$. As $p \nmid n$, $f''( \alpha) \neq  0$ in the algebraic closure of $\mathbb{Z}/p\mathbb{Z}$. Hence,  $\alpha$ has multiplicity two.
		\par Now, let $\beta$ be an integer satisfying $\beta \equiv -1 \pmod {p^2}$ and $\alpha \equiv \beta \pmod p$. Then, we have
		\begin{align}
		f(x) \nonumber &= \left( (x - \beta + \beta)^2 + 1\right)^n - a\left(x -\beta + \beta\right)^n \\ \nonumber
		&= \left((x -\beta)^2 + 2\beta(x -\beta) + \beta^2 + 1\right)^n - a\left( (x-\beta)^2 + \beta \right)^n \\ \nonumber
		&=  (x-\beta)^{2n} + {n\choose 1}(x -\beta)^{2n-1}(2\beta(x -\beta) + \beta^2 + 1) + \cdots  \\ \nonumber
		&+  {n \choose {n-1}} (x - \beta)^{2n-n+1} (2\beta(x -\beta) + \beta^2 + 1)^{n-1} + (2\beta(x -\beta) + \beta^2 + 1)^n \\ \nonumber
		 &- a\left((x - \beta)^n + {n \choose 1} \beta (x -\beta)^{n-1} + \cdots + {n \choose 1} \beta^{n-1} (x - \beta)  + \beta^n \right) \\ \nonumber
		&=  (x - \beta)^2 g(x) + \left(2n\beta(\beta^2 + 1)^{n-1} - na \beta^{n-1}\right)(x - \beta) + \left((\beta^2 +1)^n - a \beta^n\right) \\ \label{Eq-2.9}
		&=  (x - \beta)^2 g(x) + f'(\beta) (x - \beta) + f(\beta),
		\end{align}
		where 
		\begin{equation*}
		g(x) = \sum_{k = 1 }^{n} {n \choose k} u^{2k-2}(2 \beta u + t)^{n-k} - \sum_{k =2}^{n} a {n \choose k} u^{k-2} \beta^{n-k}
		\end{equation*} with $u = x -\beta $ and $t = \beta^2 + 1$. Alternatively, one can find the Taylor series expansion of $f(x)$ at $\beta$ as follows:
		\begin{align*}
		f(x)  &=   f(\beta) + (x - \beta) f'(\beta) + (x - \beta)^2 \frac{f''(\beta)	}{2} + \cdots + (x - \beta)^{2n} \frac{f^{2n}(\beta)}{(2n)!} \\ 
		& =   f(\beta) + (x - \beta) f'(\beta) + (x - \beta)^2 \left( f''(\beta)  + \cdots +   (x - \beta)^{2n -2} \frac{f^{2n}(\beta)}{(2n)!} \right).
		\end{align*}
		Then, we have 
		\[ \overline{f}(x) = (x - \beta)^2 \overline{g}(x).\]
		Note that $\overline{g}(x) \in (\mathbb{Z}/p\mathbb{Z})[x]$ is a separable polynomial, 
		since $\beta$ is the only repeated root of $\overline{f}(x)$ in the algebraic closure of $\mathbb{Z}/p\mathbb{Z}$.
		Hence, we have 
		\[ g(x) = \prod_{i=1}^{t} g_i(x) + p h(x),\]
		where $g_1(x), g_2(x), \ldots, g_t(x)$ are monic polynomials over $\mathbb{Z}$ which are distinct and irreducible modulo $p$, 
		and $h(x) \in \mathbb{Z}[x]$.
		Therefore, \eqref{Eq-2.9} yields
		\[ f(x) = (x - \beta)^2 \left(\prod_{i=1}^{t} g_i(x) + p h(x)\right) 
		+ (x - \beta) f'(\beta) + f(\beta).\]
		From Theorem \ref{Thm-2.1}, it follows that 
		\[ p \nmid [\mathbb{Z}_K : \mathbb{Z}[\theta]] \]
		if and only if $\gcd((x - \beta), \overline{M}(x)) = 1$, where
		\[ M(x) = \frac{1}{p}\left(f(x) - \prod_{i=1}^{t} g_i(x)\right)
		= \frac{1}{p}\left(p(x - \beta)^2 h(x) + (x - \beta) f'(\beta) + f(\beta)\right).\]
		Moreover, $\gcd((x - \beta), \overline{M}(x)) = 1$ 
		if and only if $f(\beta) \not\equiv 0 \pmod{p^2}$. Since $\beta \equiv -1 \pmod{p}$ we have $p \mid (2^n+a)$ and $p \nmid (2^n -a)$.
		In addition, $p \nmid an$. Consequently, $f(\beta) \not \equiv 0 \pmod {p^2}$ if and only if $p^2 \nmid( 2^n + a)$. As $p \nmid (2^n -a)$ and $p \nmid an$, it follows from \eqref{eq-1} that $$p \nmid [\mathbb{Z}_K : \mathbb{Z}[\theta]] \text{~if ~and ~only ~if~} p^2 \nmid \Delta(f).$$
		Proceeding similarly to the case $ \alpha \equiv -1 \pmod p$ as shown above, we can prove that, if $ \alpha \equiv 1 \pmod p$ then  $p \nmid [\mathbb{Z}_K : \mathbb{Z}[\theta]]$ if and only if $p^2 \nmid \Delta(f)$. This completes the proof of Case~(iii).
	\\\\ 
	\underline{Case (iv)}: $p \nmid an$ and $n$ is even. Since $n$ is even, from \eqref{Eq-2.6} we have $p \mid 2^n -a$ if $ \alpha \equiv \pm 1 \pmod p$. The rest of the proof goes along similar lines as shown in Case~(iii). This completes the proof of the theorem.
	\end{proof}
	\section{Examples and Remarks}
		\begin{example} \label{Ex-3.1}
		Let $p$ be an odd prime, and consider the polynomial $$
		f_p(x) = (x^{2}+1)^{p} - p x^{p}. $$
		Assume that $f_p(x)$ is irreducible over $\mathbb{Q}$. Using \texttt{SageMath}, we find that $f_p(x)$ is irreducible over $\mathbb{Q}$ for all primes $p<200$. By Theorem~\ref{thm-1.1}, we have $\Delta(f_p) = p^{4p-2} H(p)$, where $H(p) = (2^{p} - p)(2^{p} + p)$.
		Observe that for any odd prime $p$, we have $p \nmid H(p)$. Let $r$ be a prime divisor of $\Delta(f_p)$. If $r = p$, then by Theorem~\ref{Thm-1.2}(i) it follows that
		$$r \nmid \operatorname{ind}(f_p).$$
		If $r \neq p$, then $r \mid H(p)$, and by Theorem~\ref{Thm-1.2}(iii) we have $$
		\operatorname{ind}(f_p) = 1  \text{ if and only if } r^{2} \nmid H(p).$$
		Consequently, if $H(p)$ is squarefree, then $\operatorname{ind}(f_p)=1$, and hence the polynomial $f_p(x)$ is monogenic. We find that the set of primes up to
		$100$ for which $H(p)$ is squarefree is given by $\{3, 11, 13, 17, 19, 29, 37,47,67,71,73,89\}$. 
		\par We now compute the index of $f_p(x)$ for the remaining primes. For $p = 5$, the discriminant is
		\[ \Delta(f_5) = 5^{10} \cdot 3^{3} \cdot 37.\]
		By Theorem~\ref{Thm-1.2}{(i)}, $5$ does not divide
		$\operatorname{ind}(f_5)$, and by Theorem~\ref{Thm-1.2}{(iii)}, $37$ also does not divide $\operatorname{ind}(f_5)$.
		Moreover, applying Theorem~\ref{Thm-1.2}{(iii)} again shows that $3$ divides $\operatorname{ind}(f_5)$. Recalling the identity
		\[ \Delta(f_5) = \operatorname{ind}(f_5)^{2}\,\Delta(K) \]
		from \eqref{eq-2.1}, we conclude that $\operatorname{ind}(f_5)=3$. Applying the same reasoning, one can compute $\operatorname{ind}(f_p)$ for all the remaining primes $p<100$. In Table~\ref{Table1}, we summarize these index values.
		\begin{table}[h!]
			\caption{Values of $p$ and $\operatorname{ind}(f_p)$}\label{Table1}
			\centering
			\begin{minipage}{0.45\textwidth}
				\centering
				\begin{tabular}{cccc}
					\hline
					$p$ & $\operatorname{ind}(f_p)$&$p$ & $\operatorname{ind}(f_p)$\\
					\hline
					5 & 3 &47 & 5  \\
					7 & 33 &59 & 3 \\
					23 & 3 &61 & $21$ \\
					31 & 11 &79 & 3 \\
					41 & 3 &83 & 5 \\
					43 & 3 &97 & $15$ \\
					\hline
				\end{tabular}
			\end{minipage}
		\end{table}
	\end{example}
	\begin{remark}
		Numerical experiments conducted in \texttt{SageMath} suggest that there are many primes $p$
		for which $H(p)$ is squarefree. It would be interesting to establish, using analytic methods, the existence of infinitely many such primes.
	\end{remark}
	\begin{remark}
		Let $q\neq p$ be a prime dividing $\Delta(f_p)$. In Example~\ref{Ex-3.1}, we have $\nu_q(\Delta(f_p)) \le 3$. Consequently, Theorem~\ref{Thm-1.2}, together with \eqref{eq-1.2}, allows one to compute $\operatorname{ind}(f_p)$. However, when $\nu_q(\Delta(f_p)) \ge 4$, Theorem~\ref{Thm-1.2} alone is insufficient to decide whether $q^{2}$ divides $\operatorname{ind}(f_p)$.
	\end{remark}

\end{document}